\definecolor{DPurple}{rgb}{0.46,0.2,0.69}
\numberwithin{equation}{section}
\theoremstyle{definition}
\newtheorem{definition}{Definition}[section]
\theoremstyle{remark}
 \theoremstyle{plain}
\newtheorem{theorem}[definition]{Theorem}
\newtheorem{lemma}[definition]{Lemma}
\begin{document}

\title[$\varphi$-Normal Harmonic Mappings]{Lappan's five-point theorem for $\varphi$-Normal Harmonic Mappings}

\author{Nisha Bohra}
\address{Department of Mathematics, Sri Venkateswara College, University of Delhi, Delhi, India
}
\email{ nishabohra@svc.ac.in}
\author{Gopal Datt}
\address{Department of Mathematics, Babasaheb Bhimrao Ambedkar University, Lucknow, India}
\email{ggoapl.datt@gmail.com;   \qquad gopal.du@bbau.ac.in}
\author{Ritesh Pal}
\address{Department of Mathematics, Babasaheb Bhimrao Ambedkar University, Lucknow, India}
\email{rriteshpal@gmail.com}



\begin{abstract}
A harmonic mapping $f=h+\overline{g}$ in $\mathbb{D}$ is $\varphi$-normal if 
$f^{\#}(z)=\mathcal{O}(|\varphi(z)|),  \text{ as } |z|\to 1^-,$ where  $f^{\#}(z)={(|h'(z)|+|g'(z)|)}/{(1+|f(z)|^2)}.$ In this paper,
we establish several sufficient conditions for harmonic mappings to be $\varphi$-normal. We also extend the five-point 
theorem of Lappan for $\varphi$-normal harmonic mappings.
\end{abstract}

\keywords{Normal functions, normal harmonic mappings, $\varphi$-normal functions, spherical derivative.}
\subjclass[2010]{30D45, 31A05}
\maketitle

\vspace{-0.6cm}
\section{Introduction and Main Results }\label{S:intro}

The notion of {\it normal functions} was introduced by Yosida in \cite{Yosida 34},  and later studied by
Noshiro, although they did not use the term normal function. For the class of normal functions both of them used the 
term ``{\bf class\,(A)}".
The name {\bf normal function} was given by Lehto and Virtanen in their pioneering paper \cite{Lehto 57}, 
wherein they showed that the normal functions 
are closely related  to the problems of boundary behaviour of meromorphic functions.  Let us recall the definition of normal function.  
A  meromorphic function $f$ on the unit 
disc $\mathbb{D}\subset \mathbb{C}$ is normal if the family $\{f\circ\varphi\,:\,\varphi\in \mathsf{Aut}\,(\mathbb{D})\}$ is 
normal in the sense of Montel in 
$\mathbb{D}$, where $\mathsf{Aut}\,(\mathbb{D})$ is the group of conformal automorphisms of the unit disc. 
 Noshiro established a characterizations of normal functions,  (cf . \cite[Theorem~1, p.\,150]{Noshiro 38},
  in terms of the spherical derivative by proving that 
 a meromorphic function is normal iff 
 \begin{equation}\label{eq: nf1}
 \sup_{z\in\mathbb{D}}\,(1-|z|^2)\frac{|f'(z)|}{1+|f(z)|^2}<\infty.
 \end{equation}
 The condition~\eqref{eq: nf1} says that $f$ is Lipschitz when it is considered as a function
from the Poincar\'e's hyperbolic disc $\mathbb{D}$ into the complex projective line $\mathbb{P}^1$ endowed with the
Fubini-Study metric.
Various authors have examined the characteristics of normal meromorphic functions from both geometric and analytical 
perspectives, primarily due to their significance in geometric function theory. 
\smallskip

In this paper, our aim is to extend some of the known results of meromorphic functions to the planar harmonic mappings defined in
$\mathbb{D}$. Let us recall a few things about  planar harmonic mappings. 
Let $D$ be a simply connected domain in $\mathbb{C}$. A harmonic mapping $f$ on $D$ is a complex valued function 
which has the canonical decomposition $f=h+\overline{g}$, where $h$ and $g$ are analytic in $D$ and $g(z_0) =0$ at some 
prescribed point $z_0$ in $D.$ Lewy \cite{lewy} proved that  a harmonic mapping is locally univalent in a domain $D$  if 
and only if its Jacobian $J_f(z)= |h'(z)|^2-|g'(z)|^2$ does not vanish in $D$. If $J_f(z)>0$  throughout the domain $D$, then 
$f$ is a {\it sense-preserving} harmonic map. 
These mappings have been used in study of fluid flows apart from their applications in geometric function theory (cf.
\cite{Aleman 12}).
\smallskip

It is sense to discuss the problem of  complex valued normal 
harmonic mappings, defined in $\mathbb{D}$, given that the topic of harmonic mappings of complex 
value is currently one of the most researched in the field of complex analysis.
Recently, in 2019, H. Arbel\'aez et. al.~\cite{Arbelaz 19} introduced the notion of normal function for 
harmonic mappings. Roughly speaking,
a harmonic mapping $f$ defined in $\mathbb{D}$ is {\bf normal} if it satisfies a Lipschitz type condition.  Arbel\'aez  et. al.   
established a quantitative description of normal harmonic mapping, which we use as the definition.
\begin{definition}\label{D: normal harmonic}\cite[Proposition 2.1]{Arbelaz 19}
A harmonic mapping $f=h+\bar{g}$ in $\mathbb{D}$ is said to be {\it normal} if 
\begin{equation}\label{eq: hnm1}
\sup_{z\in \mathbb{D}} (1-|z|^2)f^{\#}(z)<\infty,
\end{equation}
 where 
 \begin{equation}\label{eq: spherical derivative}
 f^{\#}(z)=\frac{|h'(z)|+|g'(z)|}{1+|f(z)|^2}.
 \end{equation}
 \end{definition}
\smallskip

In this paper, our goal is to examine normal harmonic mappings in a wider context. More precisely,
the purpose is to study the class of those harmonic mappings that satisfy 
\begin{equation*}
f^{\#}(z)=\mathcal{O}(|\varphi(z)|),  \text{ as } 
|z|\to 1^-,
\end{equation*}
 where the growth of the function $\varphi(r)$ surpasses $1/(1-r^2)$, and 
admits a sufficient regularity close to $1$. These classes of harmonic mappings is  larger than the class of normal harmonic mappings, and their
members will be called $\varphi$-normal harmonic mappings.  The analogous class\,--\,for meromorphic functions\,--\,was 
introduced by Aulaskari and R\"{a}tty\"{a} in \cite{Aulaskari 09}. To the best of our knowledge, the class of  
$\varphi$-normal harmonic mappings are not yet explored extensively.  
Numerous beautiful outcomes are achieved in this article, such as the widely recognised rescaling result attributed to 
Lohwater and Pommerenke, which inspired Zalcman to prove the well-known Zalcman's lemma. 
\smallskip

Let us recall the definition of $\varphi$-normal function introduced by Aulaskari and R\"{a}tty\"{a} in \cite{Aulaskari 09}.  
An increasing function $\varphi:[0,1) \rightarrow(0, \infty)$ is called smoothly increasing if

$$
\varphi(r)(1-r) \rightarrow \infty \quad \text { as } r \rightarrow 1^{-}
$$
and
\begin{equation}\label{eq: phi cnvrgnc}
\mathcal{R}_{a}(z):=\frac{\varphi(|a+z / \varphi(|a|)|)}{\varphi(|a|)} \rightarrow 1 \quad \text { as }|a| \rightarrow 1^{-}
\end{equation}
uniformly on compact subsets of $\mathbb{C}$. For a given such $\varphi$, a 
meromorphic function $f $ in $\mathbb{D}$ is called $\varphi$-normal if
$$
\sup _{z \in \mathbb{D}} \frac{f^{\#}(z)}{\varphi(|z|)}<\infty.
$$
Clearly the class of $\varphi$-normal functions in $\mathbb{D}$ is a bigger class than the class of normal functions in $\mathbb{D}$.
\smallskip

Recently, in  \cite{Arbelaz 19, Ponnusamy 20},  authors have defined and discussed various properties of  
normal harmonic mappings. Following the idea of Aulaskari and R\"{a}tty\"{a} (cf. \cite{Aulaskari 09}), 
we define a more general class of $\varphi$-normal harmonic mappings. 

\begin{definition}
A harmonic mapping $f=h+\overline{g}$ in $\mathbb{D}$ is said to be $\varphi$-normal if 
\begin{equation}\label{eq: varphi h normal}
\sup_{z\in\mathbb{D}}\frac{f^{\#} (z)}{\varphi(|z|)}<\infty,
\end{equation}
 where $f^{\#} (z)$ is defined as in \eqref{eq: spherical derivative}.
 \end{definition}

Continuing the study, as in \cite{Arbelaz 19, Ponnusamy 20}, of normal harmonic mapping, we first
extend the rescaling result of Lohwater\,--\,Pommerenke~\cite[Theorem 1]{Lohwater 73}. 
Using the idea of the rescaling result of Lohwater\,--\,Pommerenke, Zalcman proved the famous Zalcman's lemma
in order to make a heuristic principle, namely {\it Robinson\,--\,Zalcman Heuristic Principle}, rigorous (see \cite{Zalcman 75}). The rescaling result 
was studied by many researchers, and extended to higher dimensions as well (see \cite{AladroKrantz, GD 20}).  
For normal harmonic mapping the rescaling result of Lohwater\,--\,Pommerenke 
was studied by Deng et al. in \cite[Theorem 1]{Ponnusamy 20}. We extend this result for $\varphi$-normal harmonic mapping
as follows.

\begin{theorem}\label{T M: Zalcman} 
A non-constant mapping $f$ harmonic in $\mathbb{D}$ is $\varphi$-normal if and only if there do not 
exist sequences $\{z_n\}; \{\rho_n\}$\,---\,where $\rho_n>0$ and $\rho_n\to 0$ as $n\to \infty$\,---\,such that  
$$\lim_{n\to \infty} f\left(z_n+\frac{\rho_n \zeta}{\varphi(|z_n|)}\right)=g(\zeta)$$
locally uniformly in $\mathbb{C}$, where $g$ is a non-constant harmonic mapping.
\end{theorem}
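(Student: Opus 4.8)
The plan is to establish both directions of the equivalence, treating this as a harmonic-mapping analogue of the classical Lohwater--Pommerenke rescaling criterion for normal meromorphic functions, with the dilation rate $1/\varphi(|z_n|)$ replacing the usual $(1-|z_n|^2)$.

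\smallskip
\noindent\emph{Necessity (contrapositive).} Suppose such sequences $\{z_n\}$ and $\{\rho_n\}$ exist with $f\bigl(z_n+\rho_n\zeta/\varphi(|z_n|)\bigr)\to g(\zeta)$ locally uniformly in $\mathbb{C}$, where $g$ is a non-constant harmonic mapping. First I would note that $f^\#$ behaves naturally under this rescaling: writing $F_n(\zeta)=f\bigl(z_n+\rho_n\zeta/\varphi(|z_n|)\bigr)$, a chain-rule computation gives $F_n^\#(\zeta)=\dfrac{\rho_n}{\varphi(|z_n|)}\,f^\#\!\bigl(z_n+\rho_n\zeta/\varphi(|z_n|)\bigr)$. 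Since $g$ is non-constant harmonic, $g^\#(\zeta_0)>0$ for some $\zeta_0$, and by locally uniform convergence $F_n^\#(\zeta_0)\to g^\#(\zeta_0)>0$; hence $f^\#\!\bigl(z_n+\rho_n\zeta_0/\varphi(|z_n|)\bigr)\cdot \rho_n/\varphi(|z_n|)$ stays bounded below by a positive constant. Setting $w_n=z_n+\rho_n\zeta_0/\varphi(|z_n|)$ (which lies in $\mathbb{D}$ and satisfies $|w_n|\to 1$ because $\rho_n\to 0$ and $\varphi(|z_n|)\to\infty$, using the hypothesis $\varphi(r)(1-r)\to\infty$ together with the regularity condition \eqref{eq: phi cnvrgnc} that compares $\varphi(|w_n|)$ with $\varphi(|z_n|)$), one gets $f^\#(w_n)/\varphi(|w_n|)\gtrsim 1/\rho_n\to\infty$, so $f$ is not $\varphi$-normal. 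The one delicate point here is confirming $|w_n|\to 1$ and $\varphi(|w_n|)/\varphi(|z_n|)\to 1$; this is exactly what \eqref{eq: phi cnvrgnc} is designed to deliver, since $\rho_n\zeta_0/\varphi(|z_n|)$ is a point of the form $z/\varphi(|a|)$ with $z=\rho_n\zeta_0$ ranging over a compact set.

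\smallskip
\noindent\emph{Sufficiency (contrapositive).} Suppose $f$ is not $\varphi$-normal, so there is a sequence $\{z_n\}\subset\mathbb{D}$ with $f^\#(z_n)/\varphi(|z_n|)\to\infty$; necessarily $|z_n|\to 1$. I would use a Landau--type maximizing argument (the harmonic analogue of the standard proof of Zalcman's lemma via Marty's criterion): on the disc $D_n=\{|z-z_n|<r_n\}$ with $r_n=\tfrac12(1-|z_n|)$, consider the continuous function $z\mapsto \bigl(1-|z-z_n|/r_n\bigr)\,f^\#(z)/\varphi(|z_n|)$, which vanishes on $\partial D_n$, and let $a_n\in D_n$ be a point where it attains its maximum $M_n$; one checks $M_n\to\infty$ using that $z_n$ itself already makes the quantity large (here the regularity of $\varphi$ again lets us replace $\varphi(|z_n|)$ by $\varphi(|a_n|)$ up to a factor tending to $1$). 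Put $\rho_n=1/M_n'$ for a suitably normalized $M_n'$ comparable to $f^\#(a_n)/\varphi(|a_n|)$, so $\rho_n\to 0$, and define $F_n(\zeta)=f\bigl(a_n+\rho_n\zeta/\varphi(|a_n|)\bigr)$. The maximality of $M_n$ forces, for $|\zeta|$ up to any fixed $R$ and $n$ large, a uniform bound $F_n^\#(\zeta)\le C$ (a Marty-type equicontinuity estimate), while the normalization gives $F_n^\#(0)=1$.

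\smallskip
The main obstacle, and the part requiring the most care, is the compactness step: from the uniform bound $F_n^\#\le C$ on compacta one must extract a subsequence of $\{F_n\}$ converging locally uniformly to a \emph{harmonic} limit $g$, and then verify $g$ is non-constant. For meromorphic functions this is Marty's theorem plus Hurwitz; for harmonic mappings $f=h+\overline g$ the analytic parts $h$ and $g$ need not individually have bounded spherical derivative, so one cannot directly apply Marty. The resolution\,---\,which I expect the authors to have isolated as a preliminary lemma earlier in the paper (the ``rescaling/compactness'' machinery for harmonic families alluded to in the introduction)\,---\,is to bound $(|h_n'|+|g_n'|)/(1+|F_n|^2)$ uniformly, use a normal-families result for harmonic mappings with locally bounded spherical derivative (e.g.\ along the lines of the harmonic Montel theorems in \cite{Arbelaz 19, Ponnusamy 20}) to get a locally uniformly convergent subsequence with harmonic limit $g$, and finally deduce $g^\#(0)=\lim F_n^\#(0)=1\ne 0$, so $g$ is non-constant. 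Passing $F_n^\#(0)=1$ to the limit requires knowing that $h_n',g_n'$ converge locally uniformly (not merely $F_n$), which follows from the standard fact that locally uniform convergence of harmonic mappings implies locally uniform convergence of the derivatives of their analytic and co-analytic parts. Once $g$ is a non-constant harmonic limit, relabeling $a_n$ as $z_n$ produces exactly the sequences asserted to exist, completing the contrapositive.
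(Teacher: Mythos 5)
Your proposal is correct and follows essentially the same Lohwater--Pommerenke/Zalcman rescaling strategy as the paper: both directions come down to the identity $F_n^{\#}(\zeta)=\frac{\rho_n}{\varphi(|z_n|)}\,f^{\#}\bigl(z_n+\rho_n\zeta/\varphi(|z_n|)\bigr)$, the regularity condition \eqref{eq: phi cnvrgnc}, and the harmonic Marty-type normality lemma of Deng--Ponnusamy--Qiao, with non-constancy of the limit read off from $g^{\#}(0)=1$. The only real difference is in how the rescaling centers are chosen in the non-normal direction: you use a Zalcman-style weighted maximum $(1-|z-z_n|/r_n)\,f^{\#}(z)/\varphi(|z_n|)$ on small discs, whereas the paper simply takes the point attaining $\sup_{|z|\le r_n}f^{\#}(z)/\varphi(|z|)$ on expanding discs $|z|\le r_n$ and sets $\rho_n=1/M_n$, which together with \eqref{eq: phi cnvrgnc} already gives the bound $g_n^{\#}(\zeta)\le 1+o(1)$ --- a cosmetic variation, not a different argument.
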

Aulaskari and R\"{a}tty\"{a}, in \cite{Aulaskari 11}, gave a characterization of $\varphi$-normal function in case of 
meromorphic functions (cf. \cite[Theorem~3]{Aulaskari 11}). 
For the case of harmonic mapping, we establish the following 
necessary condition of $\varphi$-normality.

\begin{theorem}\label{T: new Z}
Let $f$ be a harmonic mapping in $\mathbb{D}$ and let $\varphi:[0,1)\rightarrow(0,\infty)$ be an increasing function 
such that $1/\varphi$ is convex and $\lim_{r\rightarrow 1^{-}}\varphi(r)(1-r)=\infty.$ If $f$ is $\varphi$-normal then 
 the family $\{f(z_n+z/\varphi(|z_n|)):n\in \mathbb{Z}_{+}\}$ is normal in $\mathbb{C}$ for any sequence $\{z_n\}_{n=1}^{\infty}$ 
of points in $\mathbb{D}$ such that $\lim_{n\rightarrow\infty} |z_n|=1$. 
\end{theorem}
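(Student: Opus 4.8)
The plan is to show that the rescaled family $\{f_n(z) := f(z_n + z/\varphi(|z_n|))\}$ is locally bounded in the spherical sense and then invoke a normal-families criterion for harmonic mappings; the convexity of $1/\varphi$ is exactly what lets us control $\varphi$ on the shrinking hyperbolic-type disc around $z_n$ where $f_n$ is defined. First I would fix $R>0$ and, for each $n$, consider $z$ with $|z|\le R$; since $|z_n|\to 1$, the point $w = z_n + z/\varphi(|z_n|)$ lies in $\mathbb{D}$ for $n$ large because $1/\varphi(|z_n|) = o(1-|z_n|)$ by the hypothesis $\varphi(r)(1-r)\to\infty$. I would then estimate $f_n^{\#}(z)$: by the chain rule for the harmonic ``spherical derivative'' in Definition~\ref{D: normal harmonic}, one has $f_n^{\#}(z) = f^{\#}(w)/\varphi(|z_n|)$, and $\varphi$-normality of $f$ gives $f^{\#}(w) \le C\,\varphi(|w|)$ for a fixed constant $C$. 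Hence $f_n^{\#}(z) \le C\,\varphi(|w|)/\varphi(|z_n|)$, and the whole game reduces to bounding the ratio $\varphi(|w|)/\varphi(|z_n|)$ uniformly in $n$ for $|z|\le R$.

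For that ratio I would use convexity of $1/\varphi$. Write $\psi = 1/\varphi$, a positive decreasing convex function on $[0,1)$. Since $|w| \le |z_n| + R/\varphi(|z_n|) = |z_n| + R\,\psi(|z_n|)$, monotonicity gives $\psi(|w|) \ge \psi(|z_n| + R\,\psi(|z_n|))$, so $\varphi(|w|)/\varphi(|z_n|) = \psi(|z_n|)/\psi(|w|) \le \psi(|z_n|)/\psi(|z_n| + R\,\psi(|z_n|))$. Now the convexity of $\psi$, combined with $\psi\ge 0$, forces $\psi(t + h) \ge \psi(t) + h\,\psi'(t)$ to fail in the wrong direction, so instead I would argue from the standard fact that a positive decreasing convex function satisfies $\psi(t+h)/\psi(t) \ge$ something bounded below when $h$ is comparable to $\psi(t)$: more precisely, convexity of $\psi$ implies that $s\mapsto \psi(|z_n| + s\,\psi(|z_n|))/\psi(|z_n|)$ is convex in $s$, equals $1$ at $s=0$, and its one-sided derivative at $0$ is $\psi'(|z_n|) \le 0$; but $|\psi'(|z_n|)|$ itself cannot blow up too fast relative to $\psi(|z_n|)$ precisely because $\psi$ stays positive on all of $[0,1)$ — a convex positive function on $[a,1)$ has $|\psi'(a)| \le \psi(a)/(1-a)$, hence $|\psi'(|z_n|)|/\psi(|z_n|)\cdot\psi(|z_n|) = |\psi'(|z_n|)| \le \psi(|z_n|)/(1-|z_n|) = o(1)$ by the growth hypothesis. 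Feeding this back shows $\varphi(|w|)/\varphi(|z_n|) \to 1$ uniformly for $|z|\le R$, so in particular the $f_n^{\#}$ are uniformly bounded on $|z|\le R$.

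With $\sup_n \sup_{|z|\le R} f_n^{\#}(z) < \infty$ for every $R$, I would then apply the harmonic analogue of the Marty–Montel criterion — a family of harmonic mappings with locally uniformly bounded (harmonic) spherical derivatives, normalized appropriately, is a normal family. This is the step where I expect I will need to cite the relevant result from \cite{Arbelaz 19} or \cite{Ponnusamy 20}; some care is needed because $f_n(0) = f(z_n)$ need not converge, but spherical (rather than Euclidean) normality only requires local boundedness of $f_n^{\#}$ together with the chordal metric being compact, so the argument goes through exactly as in the meromorphic case.

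The main obstacle, I expect, is making the estimate $\varphi(|w|)/\varphi(|z_n|)\to 1$ genuinely \emph{uniform} in $|z|\le R$ using only convexity of $1/\varphi$ and the divergence $\varphi(r)(1-r)\to\infty$, rather than the stronger regularity condition~\eqref{eq: phi cnvrgnc} that appears in the definition of smoothly increasing $\varphi$; the point of Theorem~\ref{T: new Z} seems to be precisely that convexity of $1/\varphi$ can substitute for~\eqref{eq: phi cnvrgnc} in proving one direction. I would therefore spend the bulk of the write-up on the clean inequality for positive decreasing convex functions, $0 \le -\psi'(t) \le \psi(t)/(1-t)$, and its consequence that $\psi\big(t + s\,\psi(t)\big)/\psi(t)$ stays within $[1 - o(1), 1]$ uniformly for $s$ in a compact set — everything else is then a routine application of the chain rule and the normal-families criterion.
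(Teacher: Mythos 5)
Your proposal is correct and follows essentially the same route as the paper's proof: rescale by $\phi_{z_n}(z)=z_n+z/\varphi(|z_n|)$, use the chain rule and $\varphi$-normality to reduce everything to bounding $\varphi(|\phi_{z_n}(z)|)/\varphi(|z_n|)=\psi(|z_n|)/\psi(|\phi_{z_n}(z)|)$, then use the convexity supporting-line inequality for $\psi=1/\varphi$ together with $0\le-\psi'(t)\le\psi(t)/(1-t)\to0$ to show this ratio tends to $1$ uniformly on $|z|\le R$, and finish with the Marty-type criterion from \cite{Ponnusamy 20}. The only blemish is your remark that the inequality $\psi(t+h)\ge\psi(t)+h\psi'(t)$ ``fails in the wrong direction'' --- it is precisely this inequality (applied with $h=R\,\psi(t)$, giving the lower bound $1+R\,\psi'(t)$ for the reciprocal ratio) that the paper uses, and your subsequent argument in fact uses it too.
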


Answering a question posed by Pommerenke
\cite[Problem 3.2]{Pommerenke 72},
Lappan used the rescaling result \cite[Theorem~1]{Lohwater 73} to establish the well known Lappan's five-point theorem
\cite[Theorem~1]{Lappan 74}. Loosely speaking, the five-point theorem of Lappan says that a meromorphic function 
$f$ is normal in $\mathbb{D}$ if $\sup_{z\in f^{-1}(E)}(1-|z|^2)f^\#(z)$ is bounded for some set $E$ of five points. Datt,
in \cite{GD 23}, gave and extension of Lappan's five-point theorem for the class of $\varphi$ normal functions of several
variables. In this paper, we establish the following 
extension of Lappan's five-point theorem for the $\varphi$-normal harmonic mappings.
\begin{theorem}\label{T: five valued}
Let E be a set of five numbers in $\mathbb{C}$. If $f$ is a sense preserving harmonic 
mapping in $\mathbb{D}$ such that 
\begin{equation}\label{eq: five point cond}
\sup_{z\in f^{-1}(E)}\frac{f^{\#}(z)}{\varphi(|z|)}<\infty.
\end{equation}
Then $f$ is a $\varphi$-normal harmonic function.
\end{theorem}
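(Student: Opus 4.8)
The strategy is to argue by contradiction, using the rescaling result of Theorem~\ref{T M: Zalcman} to produce a limiting entire harmonic mapping, and then to show via \eqref{eq: five point cond} and the regularity \eqref{eq: phi cnvrgnc} of $\varphi$ that this limit would have five totally ramified values, which is impossible. So suppose $f$ is not $\varphi$-normal. By Theorem~\ref{T M: Zalcman} there are sequences $z_n\in\mathbb{D}$ and $\rho_n\to 0^+$ with
$$
f_n(\zeta):=f\big(z_n+\rho_n\zeta/\varphi(|z_n|)\big)\longrightarrow G(\zeta)
$$
locally uniformly on $\mathbb{C}$, where $G=H+\overline{K}$ is a non-constant harmonic mapping. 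Writing $w_n(\zeta)=z_n+\rho_n\zeta/\varphi(|z_n|)$, the chain rule gives the scaling identity $f_n^{\#}(\zeta)=\dfrac{\rho_n}{\varphi(|z_n|)}\,f^{\#}\big(w_n(\zeta)\big)$; since $\rho_n/\varphi(|z_n|)\to 0$ one first checks that necessarily $|z_n|\to 1$ (otherwise $G$ would be constant). As a locally uniform limit of harmonic functions carries along all partial derivatives, $h_n'\to H'$ and $g_n'\to K'$ locally uniformly, and hence $f_n^{\#}\to G^{\#}$ locally uniformly on $\mathbb{C}$.

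Next I pin down the structure of $G$. Each $f_n$ is sense preserving, so $|g_n'|<|h_n'|$, and in the limit $|K'|\le|H'|$ on $\mathbb{C}$; consequently the analytic dilatation $\omega:=K'/H'$ extends across the (discrete) zeros of $H'$ to an \emph{entire} function with $|\omega|\le 1$, and Liouville's theorem forces $\omega\equiv c$ with $|c|\le 1$. Thus $K=cH+\mathrm{const}$ and $G=\psi\circ H$, where $\psi(w)=w+\overline{c}\,\overline{w}+\mathrm{const}$ is an $\mathbb{R}$-affine self-map of $\mathbb{C}$ with Jacobian $1-|c|^2$: an orientation preserving homeomorphism when $|c|<1$, and a collapse of $\mathbb{C}$ onto a line when $|c|=1$. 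The degenerate case $|c|=1$ is disposed of separately (the range of $G$ then lies in a line $L$; using \eqref{eq: five point cond} when $L\cap E\neq\varnothing$, respectively the boundedness of $G^{\#}$ inherited from the Zalcman construction when $L\cap E=\varnothing$, one sees that $H$ is forced to be constant, a contradiction), so from now on $|c|<1$, $H$ is non-constant entire, and $G$ is sense preserving and locally univalent off the zeros of $H'$.

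Now I transfer the hypothesis to the limit. Put $M:=\sup_{z\in f^{-1}(E)}f^{\#}(z)/\varphi(|z|)<\infty$. If $f_n(\zeta)\in E$ then $w_n(\zeta)\in f^{-1}(E)$, whence
$$
f_n^{\#}(\zeta)=\frac{\rho_n}{\varphi(|z_n|)}\,f^{\#}\big(w_n(\zeta)\big)\le M\rho_n\,\frac{\varphi\big(|z_n+\rho_n\zeta/\varphi(|z_n|)|\big)}{\varphi(|z_n|)}=M\rho_n\,\mathcal{R}_{z_n}(\rho_n\zeta),
$$
and since $\rho_n\zeta$ eventually stays in a fixed compact set while $|z_n|\to 1$, property \eqref{eq: phi cnvrgnc} gives $\mathcal{R}_{z_n}(\rho_n\zeta)\to 1$; thus $f_n^{\#}(\zeta)\to 0$ at points where $f_n(\zeta)\in E$. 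I claim this forces $G^{\#}(\zeta_0)=0$ whenever $G(\zeta_0)\in E$. Indeed, if $G(\zeta_0)=a\in E$ but $G^{\#}(\zeta_0)>0$, then $H'(\zeta_0)\neq 0$, so $G$ is locally univalent and orientation preserving near $\zeta_0$ and $G-a$ has an isolated zero of index $+1$ there; by the argument principle for harmonic mappings, together with $f_n\to G$ uniformly on small circles about $\zeta_0$, one obtains $\zeta_n\to\zeta_0$ with $f_n(\zeta_n)=a\in E$, whence $f_n^{\#}(\zeta_n)\to 0$ by the displayed estimate, while $f_n^{\#}(\zeta_n)\to G^{\#}(\zeta_0)>0$ by local uniform convergence of $f_n^{\#}$ — a contradiction. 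Finally, since $G=\psi\circ H$ with $\psi$ an affine homeomorphism, $G^{\#}(\zeta)=0\iff H'(\zeta)=0$ and $G(\zeta)\in E\iff H(\zeta)\in E':=\psi^{-1}(E)$, a set of five points of $\mathbb{C}$. The claim now says that every $E'$-point of $H$ is a multiple point, i.e.\ the non-constant entire function $H$ has five finite totally ramified values; this contradicts Nevanlinna's theorem that a non-constant meromorphic function on $\mathbb{C}$ has at most four totally ramified values. Hence $f$ must be $\varphi$-normal.

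The crux is the transfer step: extracting from the pointwise, rescaling-dependent bound \eqref{eq: five point cond} a clean statement about the limit $G$. This needs two ingredients in tandem — the uniform convergence $\mathcal{R}_{z_n}(\cdot)\to 1$ coming from \eqref{eq: phi cnvrgnc}, and a harmonic version of Hurwitz's theorem ensuring that the $E$-points of the limit are genuinely caught by the $f_n$'s (this is exactly where the sense-preserving hypothesis on $f$, hence the local univalence of $G$, is essential). Disposing cleanly of the borderline case $|c|=1$, and invoking the ramification bound of Nevanlinna in the correct form, are the remaining points requiring care.
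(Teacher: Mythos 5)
Your argument follows essentially the same route as the paper's: contradiction via the rescaling Theorem~\ref{T M: Zalcman}, a harmonic Hurwitz/argument-principle step to pull the $E$-points of the limit back to points of $f^{-1}(E)$, the estimate $f_n^{\#}(\zeta_n)\le M\rho_n\,\mathcal{R}_{z_n}(\rho_n\zeta_n)\to 0$ coming from \eqref{eq: five point cond} and \eqref{eq: phi cnvrgnc}, and finally a count of values all of whose preimages under the limit are multiple. The only structural difference is at the last step: the paper runs the argument in contrapositive form, invoking Lemma~\ref{L: multiple sol} to find one value of $E$ with a simple preimage under the limit and then showing $f^{\#}(s_n)/\varphi(|s_n|)\to\infty$ along the corresponding points of $f^{-1}(E)$, whereas you show all five values of $E$ are totally ramified for the limit and then unfold Lemma~\ref{L: multiple sol} by hand (constant dilatation via Liouville, reduction to the holomorphic part $H$, Nevanlinna's four-totally-ramified-values bound). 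Since that unfolding is precisely the proof of Lemma~\ref{L: multiple sol} in Deng--Ponnusamy--Qiao, the two proofs are the same in substance.

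The one place where you go beyond the paper is also the one place where your argument is not right as written: the degenerate case $|c|=1$. You are correct that this case must be addressed (the paper silently assumes the rescaling limit is sense-preserving, which is exactly what Lemma~\ref{L: multiple sol} needs, and Theorem~\ref{T M: Zalcman} only delivers a non-constant harmonic limit), but your disposal of it does not work. In the sub-case $L\cap E=\varnothing$ you claim that boundedness of $G^{\#}$ forces $H$ to be constant; this is false, e.g. $G(\zeta)=\zeta+\overline{\zeta}$ is non-constant with $G^{\#}(\zeta)=2/\bigl(1+4(\Re\zeta)^2\bigr)\le 2$, and in this sub-case \eqref{eq: five point cond} carries no information about $G$ at all. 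In the sub-case $L\cap E\neq\varnothing$ the Hurwitz step also breaks down, because the zero set of the degenerate $G-a$ is a curve rather than an isolated point and nearby sense-preserving $f_n$ need not assume the value $a$ (consider $f_n(\zeta)=\zeta+(1-1/n)\overline{\zeta}+i/n$ near $a=0$). So the degenerate case genuinely requires a different idea — for instance, showing that the extraction in Theorem~\ref{T M: Zalcman} can be arranged to yield a non-degenerate (sense-preserving) limit. This is a gap you share with, and to your credit at least noticed in, the paper's own proof; apart from it, your argument is correct and coincides with the paper's.
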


Lappan commented on the sharpness of the number {\it five} in the five-point theorem \cite[Theorem~1]{Lappan 74}.
He observed that the number {\it five} cannot be replaced by the number {\it three} and in some cases {\it five}
cannot be replaced by {\it four} (see \cite[Theorems~3 and 4]{Lappan 74}). Tan and Thin, 
in \cite[Theorem~1]{Tan 17}, relaxed the hypotheses 
of the \cite[Theorem~1]{Lappan 74} and able to achieve the same conclusion by reducing the number {\it five} to {\it four}. 
In the following theorem we establish a version of the four-point theorem for $\varphi$-normal harmonic mappings.

\begin{theorem}\label{T: four point}
Let $\varphi:[0,1]\to (0,\infty)$ be smoothly increasing, and $f=h+\bar{g}$ be a 
sense-preserving harmonic mapping on $\mathbb{D}$. Suppose that 
$E\subset {\mathbb{C}}$ be a subset of four distinct numbers such that 
\begin{equation*}
\sup_{z\in f^{-1}(E)}\frac{1}{\varphi(|z|)}\left(\frac{|h'(z)|+|g'(z)|}{1+|f(z)|^2}\right)<\infty \ \text{ and }\ 
\sup_{z\in f^{-1}(E)}\left(\frac{|h''(z)|+|g''(z)|}{1+(|h'(z)|+|g'(z)|)^2}\right)<\infty.
\end{equation*}
Then $f$ is $\varphi$-normal harmonic mapping.
\end{theorem}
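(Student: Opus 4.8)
The plan is to argue by contradiction using the rescaling result of Theorem~\ref{T M: Zalcman}. Suppose $f=h+\bar g$ is not $\varphi$-normal. Then there exist sequences $\{z_n\}\subset\mathbb{D}$ and $\{\rho_n\}$ with $\rho_n>0$, $\rho_n\to 0$, such that the rescaled mappings
$$F_n(\zeta):=f\!\left(z_n+\frac{\rho_n\zeta}{\varphi(|z_n|)}\right)$$
converge locally uniformly in $\mathbb{C}$ to a non-constant harmonic mapping $F=H+\bar G$. Writing $z_n+\rho_n\zeta/\varphi(|z_n|)=z_n(\zeta)$, a direct computation gives
$$H'(\zeta)=\lim_{n\to\infty}\frac{\rho_n}{\varphi(|z_n|)}\,h'(z_n(\zeta)),\qquad
\overline{G'(\zeta)}=\lim_{n\to\infty}\frac{\rho_n}{\varphi(|z_n|)}\,\overline{g'(z_n(\zeta))}$$
locally uniformly. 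Since $f$ is sense-preserving, $|g'|<|h'|$, so $F$ is also sense-preserving (or constant in $G$); in particular $H$ is non-constant. The first hypothesis, $M_1:=\sup_{z\in f^{-1}(E)}f^\#(z)/\varphi(|z|)<\infty$, is used to control $F$ at its $E$-points, while the second, $M_2:=\sup_{z\in f^{-1}(E)}(|h''|+|g''|)/(1+(|h'|+|g'|)^2)<\infty$, will force the derivative of the limit $F$ to stay bounded on the preimage $F^{-1}(E)$.

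The key step is to show that $F^\#$ is \emph{uniformly bounded} on all of $\mathbb{C}$; this contradicts the fact that a non-constant entire harmonic mapping cannot be $1$-normal in the plane (equivalently, Theorem~\ref{T M: Zalcman} applied in reverse, since then no blow-up could produce $F$). To get the bound, fix $\zeta_0$ with $F(\zeta_0)=w_0\in E$; then $F_n(\zeta_0)\to w_0$, and one normalizes so that $z_n(\zeta_0)\in f^{-1}(E)$ up to a small perturbation — more precisely, since $E$ has four points and $F$ is non-constant, by Hurwitz-type reasoning $F_n$ takes values near each point of $E$, and one pulls back to honest points of $f^{-1}(E)$. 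At such points, $M_1$ bounds $\dfrac{|h'(z_n(\zeta))|+|g'(z_n(\zeta))|}{\varphi(|z_n|)\,(1+|f(z_n(\zeta))|^2)}$, hence bounds $F_n^\#(\zeta)\cdot\big(\rho_n\big/(1+|F_n(\zeta)|^2)\big)^{-1}$-type quantities, giving a bound on $|H'(\zeta_0)|+|G'(\zeta_0)|$. To propagate this from the discrete set $F^{-1}(E)$ to all of $\mathbb{C}$, one uses the second hypothesis: $M_2$ gives a differential inequality for $(|h'|+|g'|)$ along the rescaled variable, namely
$$\Big|\tfrac{d}{d\zeta}\log\!\big(1+(|h'(z_n(\zeta))|+|g'(z_n(\zeta))|)^2\big)\Big|\le \frac{2\rho_n}{\varphi(|z_n|)}\cdot\frac{|h''|+|g''|}{1+(|h'|+|g'|)^2}\cdot(|h'|+|g'|)\le C\rho_n,$$
so that $(|H'|+|G'|)$ cannot grow faster than a controlled rate away from the points of $F^{-1}(E)$; combining this Gronwall-type estimate with the pointwise bounds at $F^{-1}(E)$, and with the elementary inequality $F^\#\le |H'|+|G'|$, yields $\sup_{\mathbb{C}}F^\#<\infty$.

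Finally, I would invoke the standard fact (the base case of Theorem~\ref{T M: Zalcman}, or a Marty/Montel-type argument for harmonic mappings as in \cite{Arbelaz 19}) that a non-constant harmonic mapping $F$ on $\mathbb{C}$ with $\sup_{\mathbb{C}}F^\#<\infty$ does not exist: rescaling at a point where $F^\#$ is near its supremum and applying Theorem~\ref{T M: Zalcman} with $\varphi\equiv 1$ would still produce a non-constant harmonic limit on $\mathbb{C}$, and iterating forces $F$ to be a harmonic mapping of bounded spherical derivative whose image misses enough of $\widehat{\mathbb{C}}$ to be constant by Picard-type obstructions for harmonic maps. This contradiction shows $f$ must be $\varphi$-normal.

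The main obstacle I anticipate is the propagation step: unlike the meromorphic setting, where $f^\#$ and the Schwarzian-type second-order data combine cleanly, here $h$ and $g$ are separate analytic pieces and the quantity $|h'|+|g'|$ is only subharmonic-like rather than the modulus of a single analytic function, so the Gronwall estimate must be run carefully on both $h'$ and $g'$ simultaneously while respecting the sense-preserving condition $|g'|\le|h'|$; making the "pull back $F^{-1}(E)$ to $f^{-1}(E)$" step rigorous for a \emph{four}-point set (rather than five) will require the kind of refinement Tan and Thin used in \cite{Tan 17}, exploiting the second hypothesis to rule out the degenerate configuration that would otherwise need a fifth point.
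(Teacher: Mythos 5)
Your setup (contradiction via the rescaling Theorem~\ref{T M: Zalcman}, producing a non-constant harmonic limit $F=H+\bar G$, and pulling zeros of $F-a$ back to honest points of $f^{-1}(E)$ via Hurwitz) matches the paper, but your ``key step'' is built on a false premise. You propose to show $\sup_{\mathbb{C}}F^{\#}<\infty$ and then conclude that no such non-constant $F$ exists. That final fact is simply wrong: $F(\zeta)=\zeta$ is a non-constant harmonic (indeed holomorphic) mapping of $\mathbb{C}$ with $F^{\#}(\zeta)=1/(1+|\zeta|^2)\le 1$. Non-constant entire functions of bounded spherical derivative (Yosida functions) form a large class, and Theorem~\ref{T M: Zalcman} ``with $\varphi\equiv 1$'' does not apply on $\mathbb{C}$ --- it is a statement about mappings of $\mathbb{D}$ and the boundary behaviour as $|z|\to 1^-$. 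So even if your Gronwall-type propagation of the bound from $F^{-1}(E)$ to all of $\mathbb{C}$ could be made rigorous (which is itself doubtful, since $F^{-1}(E)$ need not be ``dense enough'' to seed such an estimate), the conclusion you want to draw from $\sup F^{\#}<\infty$ does not follow. There is no Picard-type obstruction here either: the limit $F$ need not omit any values.

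The mechanism the paper actually uses is a multiplicity count, not a growth bound. At a zero $\zeta_0$ of $F-a$ with $a\in E$, Hurwitz gives $\zeta_n\to\zeta_0$ with $F_n(\zeta_n)=a$, so $z_n+\rho_n\zeta_n/\varphi(|z_n|)\in f^{-1}(E)$. The first hypothesis then forces $F_n^{\#}(\zeta_n)\le M\rho_n\,\varphi(|z_n+\rho_n\zeta_n/\varphi(|z_n|)|)/\varphi(|z_n|)\to 0$, so $H'(\zeta_0)=G'(\zeta_0)=0$. The second hypothesis, combined with the bound on $|h'|+|g'|$ coming from the first (note $f=a$ there, so $1+|f|^2$ is bounded by a constant depending only on $E$), gives after rescaling a factor $\rho_n^2/\varphi(|z_n|)^2$ against a quantity of size $O(\varphi^2)$, which again tends to $0$; hence $H''(\zeta_0)=G''(\zeta_0)=0$. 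Thus every zero of $F-a$ has multiplicity at least $3$ for each of the four values $a\in E$, and Lemma~\ref{L: four zeros} (proved via Nevanlinna's second main theorem: at most three values $a$ can have all zeros of $F-a$ of multiplicity $\ge 3$) yields the contradiction. This is where the reduction from five points to four genuinely lives --- the second hypothesis buys one extra order of vanishing, which improves the deficiency count from $1/2$ per value to $2/3$ per value --- and it is entirely absent from your argument.
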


\section{Essential Lemmas}

In order to prove our results,  we need some essential lemmas. In this section, we shall recall some known results and
prove a few new auxiliary lemmas.
In the theory of normal families, Marty's theorem~\cite{Marty 31} characterizes normality of a family of meromorphic functions by a 
condition wherein the spherical derivative is locally bounded. 
In case of  a family of harmonic mappings, the following sufficient condition  proved in \cite{Ponnusamy 20}
serves as a ``partial"  generalization of Marty's Theorem.

\begin{lemma}\cite{Ponnusamy 20}
A family $\mathcal{F}$ of harmonic mappings $f=h+\overline{g}$ in $\mathbb{D}$ is normal if 
$\{f^{\#}(z)\,:\,f\in \mathcal{F}\}$ is uniformly locally bounded.  
\end{lemma}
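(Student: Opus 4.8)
The plan is to upgrade the pointwise control of $f^{\#}$ into spherical equicontinuity of the whole family on compact subsets, and then to apply the Arzel\`a--Ascoli theorem for maps into the Riemann sphere. Write $\chi$ for the chordal metric on $\overline{\mathbb{C}}$, with length element $2\,|dw|/(1+|w|^2)$. The starting point is the elementary differential estimate for a harmonic mapping $f=h+\overline{g}$: since $f_z=h'$ and $f_{\bar z}=\overline{g'}$, every tangent vector satisfies $|df|\le(|h'|+|g'|)\,|dz|$, so measuring the image of a piecewise-smooth path $\gamma$ from $z_1$ to $z_2$ in the chordal metric gives
\begin{equation*}
\chi\bigl(f(z_1),f(z_2)\bigr)\le\int_{\gamma}\frac{2\,\bigl(|h'(z)|+|g'(z)|\bigr)}{1+|f(z)|^2}\,|dz|=2\int_{\gamma}f^{\#}(z)\,|dz|.
\end{equation*}

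Next I would fix a compact set $K\subset\mathbb{D}$ and enlarge it to a compact, convex neighborhood $K'\subset\mathbb{D}$ on which, by the uniform local boundedness hypothesis, $f^{\#}(z)\le M$ holds for all $z\in K'$ and all $f\in\mathcal{F}$ simultaneously. Choosing $\gamma=[z_1,z_2]\subset K'$ in the inequality above yields the uniform spherical Lipschitz bound
\begin{equation*}
\chi\bigl(f(z_1),f(z_2)\bigr)\le 2M\,|z_1-z_2|,\qquad z_1,z_2\in K,\ f\in\mathcal{F},
\end{equation*}
so $\mathcal{F}$ is equicontinuous on $K$ with respect to $\chi$. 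As $K$ was arbitrary, $\mathcal{F}$ is spherically equicontinuous on every compact subset of $\mathbb{D}$, and because the target $(\overline{\mathbb{C}},\chi)$ is compact the family is automatically pointwise relatively compact. By the Arzel\`a--Ascoli theorem, every sequence in $\mathcal{F}$ admits a subsequence that converges, locally uniformly with respect to $\chi$, to some continuous map $F\colon\mathbb{D}\to\overline{\mathbb{C}}$.

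It then remains to identify $F$ among the limits allowed in the definition of a normal family of harmonic mappings. On the open set where $F$ is finite, chordal convergence coincides with locally uniform Euclidean convergence, so there $F$ is a locally uniform limit of harmonic mappings and hence harmonic; the remaining task is to show that the set $\{F=\infty\}$ forces $F$ to be either a genuine harmonic mapping or the constant $\infty$. I expect this last identification to be the main obstacle. Unlike the meromorphic case, where the passage to $1/f$ and Hurwitz's theorem pin down the pole set, harmonic mappings enjoy no such rigidity---$1/f$ need not be harmonic---so one must argue directly, using the Lipschitz estimate inherited by $F$ together with the mean-value property of the harmonic components $h,g$, that the infinite-value set cannot be a proper, nontrivial part of $\mathbb{D}$. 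By contrast, the equicontinuity and Arzel\`a--Ascoli steps are completely routine once the chordal length inequality is in hand.
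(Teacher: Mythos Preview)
The paper does not prove this lemma. The forward direction---uniform local boundedness of $\{f^{\#}\}$ implies normality---is simply quoted from \cite{Ponnusamy 20} with no argument. The only proof the paper supplies after the statement is explicitly labelled ``Proof of the converse part'' and establishes the \emph{reverse} implication, and only under the extra hypothesis $\Re\,h(z)g(z)\ge 0$: assuming $\mathcal{F}$ normal, it passes to the holomorphic families $\{h\}$ and $\{g\}$, invokes the classical Marty theorem to bound $h^{\#}$ and $g^{\#}$ locally, and then uses $|f|^{2}\ge|h|^{2}+|g|^{2}$ (this is where $\Re\,hg\ge 0$ is used) to dominate $f^{\#}$ by $h^{\#}+g^{\#}$. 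So there is nothing in the paper to compare your forward argument against; you are proving a different direction from the one the paper addresses.

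On the merits of your sketch: the chordal length estimate $\chi(f(z_1),f(z_2))\le 2\int_{\gamma}f^{\#}\,|dz|$ together with Arzel\`a--Ascoli on $(\overline{\mathbb{C}},\chi)$ is exactly the standard route and is correct. Whether your last paragraph is needed depends on the definition of normality in force. In \cite{Ponnusamy 20} (following \cite{Arbelaz 19}) a family of harmonic mappings is called normal when every sequence admits a subsequence converging locally uniformly with respect to the spherical metric; under that convention your proof is complete once Arzel\`a--Ascoli delivers a spherically convergent subsequence, and the dichotomy ``harmonic or identically $\infty$'' is a separate statement, not part of the lemma. Your instinct that this dichotomy is the delicate point is right---the $1/f$ trick from the meromorphic setting is unavailable---but it is not required here.
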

We observe that the converse holds true if $\Re\,{h(z)g(z)}\geq0$ in $\mathbb{D}$, where $\Re\,z$
represents the real part of $z$.

\begin{proof}[Proof of the converse part]
We prove the partial converse here. Let $\mathcal{F}$
be a normal family of harmonic mappings $f=h+\overline{g}$ in $\mathbb{D}$. Then we have two families 
$\mathcal{H}_\mathcal{F}:=\{h\,:\,f=h+\overline{g}\}$
and  $\mathcal{G}_\mathcal{F}:=\{g\,:\,f=h+\overline{g}\}$ of holomorphic functions in $\mathcal{D}$. By Marty's theorem we have 
$\{h^\#(z): h\in \mathcal{H}_\mathcal{F}\}$,  and $\{g^\#(z): g\in \mathcal{G}_\mathcal{F}\}$ are uniformly locally bounded.
Therefore, we have 
\begin{align*}
f^\#(z)&=\frac{|h'(z)|+|g'(z)|}{1+|f(z)|^2}=\frac{|h'(z)|}{1+|f(z)|^2}+\frac{|g'(z)|}{1+|f(z)|^2}\\
&\leq \frac{|h'(z)|+|g'(z)|}{1+|h(z)|^2} +\frac{|h'(z)|+|g'(z)|}{1+|g(z)|^2}=h^\#(z)+g^\#(z). 
\end{align*}
The above inequality holds because 
$|f(z)|^2\geq |h(z)|^2+|g(z)|^2$, as $\Re\,h(z)g(z)\geq0$. Now the result follows.
\end{proof}
The next lemma is a generalization of Hurwitz's theorem for harmonic mappings.
\begin{lemma}\cite[Para. 3, P. 10]{Duren}\label{L: Hurwitz}
Let $\{f_n\}$ be a sequence of sense-preserving harmonic mappings in $\mathbb{D}$ such that $f_n$ converges
locally uniformly to a sense-preserving harmonic mapping $f$. Then  $z_0\in\mathbb{D}$ is a zero of $f$ if and 
only if $z_0$ is a cluster point of the zeros of $f_n$, $n\geq 1$.
\end{lemma}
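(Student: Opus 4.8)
The plan is to mimic the classical proof of Hurwitz's theorem, replacing the complex-analytic argument principle by its harmonic counterpart (the cited result of Duren) and replacing Rouch\'e's theorem by the purely topological homotopy-invariance of the winding number. The two implications are handled separately, one being elementary and the other carrying all the content.

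First, for the easy implication\,---\,that a cluster point $z_0$ of the zeros of the $f_n$ is a zero of $f$\,---\,I would argue directly by continuity. If $z_{n_k}$ is a zero of $f_{n_k}$ with $z_{n_k}\to z_0$, then on a compact neighbourhood of $z_0$ the estimate
$$|f(z_0)|\le |f(z_0)-f(z_{n_k})|+|f(z_{n_k})-f_{n_k}(z_{n_k})|$$
forces $f(z_0)=0$: the first term vanishes by continuity of the limit $f$, and the second vanishes by local uniform convergence, since $f_{n_k}(z_{n_k})=0$. No harmonic machinery is needed here.

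For the substantive implication\,---\,that a zero $z_0$ of $f$ is a cluster point of the zeros of the $f_n$\,---\,I would proceed as follows. Since $f$ is sense-preserving, its Jacobian is positive, so by Lewy's theorem $f$ is locally univalent and hence its zeros are isolated. Fix a small radius $r>0$ so that $z_0$ is the only zero of $f$ in $\overline{\{|z-z_0|\le r\}}$, and set $\delta:=\min_{|z-z_0|=r}|f(z)|>0$. By local uniform convergence, for all large $n$ one has $|f_n-f|<\delta$ on the circle $\gamma_r=\{|z-z_0|=r\}$, so in particular $f_n$ does not vanish on $\gamma_r$. The homotopy $H(z,t)=f(z)+t\bigl(f_n(z)-f(z)\bigr)$, $t\in[0,1]$, then satisfies $|H(z,t)|\ge |f(z)|-|f_n(z)-f(z)|>0$ on $\gamma_r$, so it never meets the origin; consequently the winding number of $f_n\circ\gamma_r$ about $0$ equals that of $f\circ\gamma_r$. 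Invoking the argument principle for sense-preserving harmonic mappings, this common winding number equals the number of zeros of $f$ inside $\gamma_r$, which is at least one. Applying the argument principle again to the sense-preserving mapping $f_n$, I conclude that $f_n$ has a zero inside $\gamma_r$ for all large $n$. Choosing a sequence $r_k\to 0$, and for each $k$ an index $n_k$ (increasing) together with a zero $w_k$ of $f_{n_k}$ in $\{|z-z_0|<r_k\}$, produces zeros $w_k\to z_0$, so $z_0$ is a cluster point of the zeros.

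The main obstacle is the harmonic argument principle itself: unlike the holomorphic case one cannot integrate $f'/f$, and one must know that for a sense-preserving harmonic mapping the change of argument of $f$ along $\gamma_r$ faithfully counts zeros with positive sign. This is exactly the content of the cited result of Duren, and it clarifies the double role of the sense-preserving hypothesis\,---\,it yields isolated zeros (via Lewy) and it guarantees that the winding number is a genuine nonnegative zero-count rather than a signed quantity prone to cancellation. The remaining ingredients, namely the homotopy-invariance of the winding number and the continuity estimate, are topological and transfer verbatim from the classical setting.
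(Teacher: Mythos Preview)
The paper does not actually prove this lemma; it merely quotes it from Duren's book, so there is no ``paper's own proof'' to compare against. Your argument is correct and is precisely the standard route one takes: the easy direction by continuity, and the substantive direction via isolated zeros (Lewy), the homotopy $H(z,t)=f(z)+t(f_n(z)-f(z))$ to transfer the winding number, and the harmonic argument principle to convert that winding number into a zero count for both $f$ and $f_n$. This is essentially how the result is obtained in Duren as well, so your proposal fills in exactly what the paper leaves to the citation.
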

Using lemma~\ref{L: Hurwitz}, Deng et. al. proved the following lemma, which will be used in the proof of our main theorems.

\begin{lemma}\cite[Lemma 3]{Ponnusamy 20}\label{L: multiple sol}
Let $f=h+\overline{g}$ be a sense-preserving harmonic mapping in $\mathbb{C}$ with $g(0)=0$. Then there are  
at most four values of $a$ for which all zeros of $f-a$ are multiple. 
\end{lemma}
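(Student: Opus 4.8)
The plan is to exploit the strong rigidity of sense-preserving harmonic mappings of the \emph{entire} plane and thereby reduce the assertion to the classical bound on totally ramified values of a holomorphic function. The starting observation is that the second dilatation $\omega := g'/h'$ is, after removing removable singularities, a bounded entire function: at any point where $h'$ vanishes the inequality $|g'|^2 \le |h'|^2$ forced by $J_f = |h'|^2 - |g'|^2 \ge 0$ shows that $g'$ vanishes there too, so $\omega$ extends holomorphically across such points, and the resulting entire function satisfies $|\omega| \le 1$ on $\mathbb{C}$. Hence $\omega \equiv c$ for a constant $c$ with $|c| \le 1$ by Liouville's theorem. The degenerate case $|c| = 1$ gives $J_f \equiv 0$, i.e.\ $f$ collapses onto a curve and is not a genuine (open) harmonic mapping, so I would dispose of it at the outset and assume $|c| < 1$.

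Next I would integrate $g' = c\,h'$ and use the normalization $g(0) = 0$ to obtain $g(z) = c\bigl(h(z) - h(0)\bigr)$, so that
\[
f(z) = h(z) + \bar c\,\overline{h(z)} - \bar c\,\overline{h(0)} = L\bigl(h(z)\bigr),
\]
where $L(w) := w + \bar c\,\bar w - \bar c\,\overline{h(0)}$ is an affine real-linear self-map of $\mathbb{C}$. Since the Jacobian determinant of $w \mapsto w + \bar c\,\bar w$ equals $1 - |c|^2 > 0$, the map $L$ is an orientation-preserving affine homeomorphism of $\mathbb{C}$. Consequently $f = L \circ h$ with $h$ a nonconstant entire function (nonconstant because $f$ is nonconstant and $L$ is injective), and $f^{-1}(a) = h^{-1}\bigl(L^{-1}(a)\bigr)$ for every $a$.

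Because $L$ is a global diffeomorphism, the local degree of $f$ at a point $z_0 \in f^{-1}(a)$ coincides with the local degree of the holomorphic map $h$ at $z_0$, i.e.\ with the order of vanishing of $h - L^{-1}(a)$ at $z_0$. Hence $z_0$ is a multiple zero of $f - a$ precisely when $h'(z_0) = 0$, and \emph{all zeros of $f - a$ are multiple} is equivalent to saying that $b := L^{-1}(a)$ is a totally ramified value of $h$ (omitted values of $h$ being included vacuously). By Nevanlinna's second main theorem applied to the nonconstant entire function $h$, the set of such values $b$ has at most four elements (and in fact, since $h$ is entire, the sharp bound is two). As $a \mapsto L^{-1}(a)$ is a bijection, at most four values $a$ can have all zeros of $f - a$ multiple, which is the claim.

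The main obstacle is the careful bookkeeping of multiplicities: one must make sure that the harmonic notion of a multiple zero of $f - a$ really does correspond to the order of vanishing of the associated holomorphic function under $L$, and that all the relevant zeros are isolated so that counting makes sense. This is exactly where the Hurwitz-type Lemma~\ref{L: Hurwitz} enters, guaranteeing that zeros of sense-preserving harmonic mappings are isolated and that their multiplicities behave stably; together with the identity principle for $h$ it legitimizes the reduction to the holomorphic value-distribution count. A secondary point to check is the degenerate dilatation case $|c| = 1$ and the requirement that $f$ (equivalently $h$) be genuinely nonconstant, both of which must be settled before invoking the defect relation.
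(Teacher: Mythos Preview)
Your argument is correct and follows essentially the same route as the paper (which, for this cited lemma, is exhibited in the proof of Lemma~\ref{L: four zeros}): use Liouville on the dilatation $\omega=g'/h'$ to get a constant $|c|<1$, rewrite $f(z)=a$ as a single equation for the entire function $h$, and then invoke the Nevanlinna bound on totally ramified values. Your observation that the sharp bound for entire $h$ is in fact two (so ``four'' is not optimal) is accurate, and your appeal to Lemma~\ref{L: Hurwitz} for the multiplicity bookkeeping is unnecessary---the identification $f=L\circ h$ with $L$ a diffeomorphism already transfers multiplicities directly.
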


We will use the following lemma in order to prove a result similar to Lemma~\ref{L: multiple sol}. We mention here that
researchers working in value distribution theory know the following lemma, nevertheless, for completeness of the paper, we give a
proof of it.
\begin{lemma}\label{L: f z hol}
Let $f$ be a non-constant entire function. Then there are at most three values $a\in \mathbb{C}$ for which
each zero of $f-a$ is of multiplicity at least $3$.
\end{lemma}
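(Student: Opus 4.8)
The plan is to use the second main theorem of Nevanlinna value distribution theory (the "defect relation" / second fundamental theorem) applied to the non-constant entire function $f$. Recall that for a non-constant entire function $f$ and distinct values $a_1,\dots,a_q\in\mathbb{C}$, the second main theorem gives
\[
(q-1)\,T(r,f)\;\le\;\sum_{j=1}^{q}\overline{N}\!\left(r,\tfrac{1}{f-a_j}\right)+S(r,f),
\]
where $\overline{N}$ counts zeros without multiplicity and $S(r,f)=o(T(r,f))$ outside a set of finite measure. If for each of the values $a_1,\dots,a_q$ every zero of $f-a_j$ has multiplicity at least $3$, then $\overline{N}(r,\tfrac{1}{f-a_j})\le \tfrac13 N(r,\tfrac{1}{f-a_j})\le \tfrac13 T(r,f)+O(1)$ by the first main theorem. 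Substituting, $(q-1)T(r,f)\le \tfrac{q}{3}T(r,f)+S(r,f)$, i.e. $\left(q-1-\tfrac{q}{3}\right)T(r,f)\le S(r,f)$; the coefficient is $\tfrac{2q-3}{3}$, which is positive once $q\ge 2$, and in fact for $q\ge 4$ it is at least $\tfrac53$. Since $f$ is non-constant entire, $T(r,f)\to\infty$, so $T(r,f)\le S(r,f)=o(T(r,f))$ is a contradiction. Hence $q\le 3$.

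Concretely, I would structure the proof as follows. First, recall the statements of the first and second main theorems of Nevanlinna theory that I need, citing a standard reference (e.g. Hayman or the classical source already in the bibliography if available). Second, assume for contradiction that there exist four distinct values $a_1,a_2,a_3,a_4$ such that every zero of each $f-a_j$ has multiplicity $\ge 3$. Third, apply the ramification estimate $\overline{N}(r,1/(f-a_j))\le\frac13 N(r,1/(f-a_j))\le\frac13(T(r,f)+O(1))$ for each $j$. Fourth, plug these into the second main theorem with $q=4$ to obtain $3\,T(r,f)\le \frac43 T(r,f)+S(r,f)$, hence $\frac53 T(r,f)\le S(r,f)$, contradicting $S(r,f)=o(T(r,f))$ as $r\to 1^-$ (here $\mathbb{D}$ is the domain, but the argument is identical for entire functions on $\mathbb{C}$ with $r\to\infty$; since the lemma is stated for entire $f$, I use $r\to\infty$). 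This contradiction establishes the bound of three.

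**The main obstacle**, such as it is, is mostly bookkeeping rather than a genuine difficulty: one must be careful that the second main theorem is being applied in the correct setting (entire functions, so no pole term, and $S(r,f)$ is small outside an exceptional set of finite linear measure), and one should remark why $T(r,f)\to\infty$ for a non-constant entire function so that the exceptional set does not obstruct the contradiction. One subtlety worth a sentence: if $f$ is a polynomial, $T(r,f)=O(\log r)$ and $S(r,f)$ is genuinely $O(\log r)$, so one should note separately that a polynomial of degree $d$ has, for generic $a$, exactly $d$ simple zeros of $f-a$, and the set of $a$ for which $f-a$ has a zero of multiplicity $\ge 3$ is contained in the (finite) set of values where $f'=f''=0$; hence at most $\deg f'' = d-2$ such values, and more to the point the Riemann–Hurwitz / critical-value count shows there are at most finitely many, but the sharper bound of three still follows from treating the polynomial case by an elementary degree count on $f'$, whose zeros of order $\ge 2$ correspond to the bad values. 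For transcendental $f$ the Nevanlinna argument above is clean and gives the result directly. I would present the transcendental case via the second main theorem and dispatch the polynomial case with the elementary remark, or alternatively invoke a unified form of the second main theorem valid for all non-constant entire functions and note that the polynomial subcase is immediate since $f$ has only finitely many critical values.
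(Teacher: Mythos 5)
Your proposal is correct and follows essentially the same route as the paper: assume four such values, apply the second main theorem together with the ramification estimate $\overline{N}\bigl(r,1/(f-a_j)\bigr)\le\tfrac13 T(r,f)+O(1)$, and derive the contradiction $cT(r,f)\le S(r,f)$ with $c>0$. The only difference is that you also dispatch the polynomial case (where the error term is not genuinely $o(T(r,f))$) by an elementary degree count on $f'$ --- a point the paper's proof silently glosses over, so this extra care is a small improvement rather than a divergence.
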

\begin{proof}
To prove this lemma, we use the second main theorem of Nevanlinna theory. We assume that the reader 
is acquainted with the terminologies and notations of the Nevanlinna theory. Assume on the contrary that there 
is a set $E=\{a_1, a_2, a_3, a_4\}\subset \mathbb{C}$ such that each zero of $f-a_i$, $a_i\in E$, is of multiplicity 
at least three.Then by the the second main theorem, we get
\begin{align*}
2T(r,f)&\leq \sum_{i=1}^{4}\overline{N}\left(r, \frac{1}{f-a_i}\right)+o(T(r, f))\\
&\leq \frac{4}{3}T(r, f)+o(T(r, f)),
\end{align*}
for all $r\in [1, \infty)$ excluding a set of finite Lebesgue measure. This is a contradiction.
\end{proof}
Using Lemma~\ref{L: f z hol}, and the same technique as in \cite[Lemma 3]{Ponnusamy 20}, we prove the following lemma.
\begin{lemma}\label{L: four zeros}
Let $f$ be a sense preserving harmonic function in $\mathbb{C}$.
Then there are at most three values $a$ for which all zeros of $f-a$ are of multiplicity at least $3$.
\end{lemma}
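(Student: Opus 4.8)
The plan is to reduce the assertion to its holomorphic counterpart, Lemma~\ref{L: f z hol}, by exploiting the fact that a sense-preserving harmonic mapping of the whole plane has constant dilatation $c$ with $|c|<1$. Write $f=h+\overline g$ and assume $f$ is non-constant, so that $h$ is a non-constant entire function. First I would record that, since $f$ is sense-preserving, $|g'(z)|\le|h'(z)|$ for every $z\in\mathbb{C}$; hence at each zero of $h'$ the function $g'$ vanishes to at least the same order, so the dilatation $\omega=g'/h'$ extends holomorphically across these points and satisfies $|\omega|\le 1$ on $\mathbb{C}$. By Liouville's theorem $\omega\equiv c$ for a constant $c$, and sense-preservation forces $|c|<1$. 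Integrating $g'=c\,h'$ gives $g=c\,h+d$ for some constant $d$, so that
\begin{equation*}
f(z)=h(z)+\overline c\,\overline{h(z)}+\overline d .
\end{equation*}

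Next I would match the level sets of $f$ with those of $h$. For $a\in\mathbb{C}$, a point $z$ is a zero of $f-a$ exactly when $h(z)+\overline c\,\overline{h(z)}=a-\overline d$. The real-linear map $w\mapsto w+\overline c\,\overline w$ of $\mathbb{C}\cong\mathbb{R}^2$ has determinant $1-|c|^2>0$, hence is a bijection of $\mathbb{C}$; therefore there is a unique $w(a)$ with $w(a)+\overline c\,\overline{w(a)}=a-\overline d$, and $a\mapsto w(a)$ is a bijection of $\mathbb{C}$. Consequently the zero set of $f-a$ equals the zero set of the holomorphic function $h-w(a)$. Putting $\psi=h-w(a)$, one has $f-a=\psi+\overline c\,\overline\psi$, and since the summand $\overline c\,\overline\psi$ contributes only terms antiholomorphic in $z-z_0$, the holomorphic part of $f-a$ near any zero $z_0$ is $\psi$ itself; hence the order of vanishing of $f-a$ at $z_0$ equals the order of vanishing of $h-w(a)$ at $z_0$. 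In particular, every zero of $f-a$ has multiplicity at least $3$ if and only if every zero of $h-w(a)$ does.

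It then remains to apply Lemma~\ref{L: f z hol} to the non-constant entire function $h$: there are at most three values $w\in\mathbb{C}$ for which every zero of $h-w$ has multiplicity at least $3$. Pulling this back along the bijection $a\mapsto w(a)$ produces at most three values $a\in\mathbb{C}$ for which every zero of $f-a$ has multiplicity at least $3$, which is precisely the claim.

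I expect the two points requiring the most care to be the removability of the singularities of $\omega$ at the zeros of $h'$ --- this is where sense-preservation enters, and it is what makes the whole reduction possible --- and the bookkeeping showing that the passage from $f-a$ to $h-w(a)$ preserves the order of each zero, i.e.\ that the antiholomorphic summand $\overline c\,\overline\psi$ cannot kill the leading holomorphic term of $\psi$. Once these are settled, the conclusion is immediate from Lemma~\ref{L: f z hol}, exactly as in \cite[Lemma 3]{Ponnusamy 20}.
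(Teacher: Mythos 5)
Your proof is correct and takes essentially the same route as the paper: Liouville's theorem applied to the dilatation gives $g'=c\,h'$ with $|c|<1$, the equation $f(z)=a$ is converted into $h(z)=w(a)$ for a uniquely determined $w(a)$, and Lemma~\ref{L: f z hol} is then applied to the entire function $h$. You additionally spell out the two steps the paper delegates to \cite[Lemma 3]{Ponnusamy 20} and to the reader --- the bijectivity of the real-linear map $w\mapsto w+\overline{c}\,\overline{w}$ (determinant $1-|c|^2>0$) and the fact that the antiholomorphic summand cannot alter the order of a zero, so multiplicities are preserved under the correspondence $a\mapsto w(a)$ --- which is a worthwhile clarification but not a different argument.
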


\begin{proof}
Let $f=h+\overline{g}$ be a sense preserving harmonic mapping in $\mathbb{C}$, and $\omega(z)=g'(z)/h'(z)$. Then
$|\omega(z)|<1$ in $\mathbb{C}$, therefore, by Liouville's theorem, $\omega(z)=\alpha$ with the constant
 $|\alpha|<1$.
As in the proof of Lemma~3 of \cite[p.~611]{Ponnusamy 20}, for any complex number $a$, $f(z)=a$ is 
equivalent to $$h(z)=\frac{a-\overline{\alpha a}+\overline{\alpha h(0)}-|\alpha|^2h(0)}{1-|\alpha|^2}.$$
Now by Lemma~\ref{L: f z hol},  there are at most three values $a$ for which all zeros of $h-a$ are of multiplicity 
at least $3$. Hence, there are  at most three values $a$ for which all zeros of $f-a$ are of multiplicity 
at least $3$.
\end{proof}
\section{Proof of Main Theorems}

\begin{proof}[Proof of Theorem~\ref{T M: Zalcman}]
Suppose $f$ is not $\varphi$-normal, then there exists a sequence $\{z_n^*\}$ of points in $\mathbb{D}$ such that
$|z_n^*|\to 1$ as $n\to \infty$ and 
\begin{equation}\label{Eq: Z1}
\frac{f^{\#}(z_n^*)}{\varphi(|z_n^*|)}\to \infty \qquad\qquad \text{ as } n\to \infty.
\end{equation}
Let $\{r_n\}$ be a sequence such that $|z_n^*|<r_n<1$. Define 
\begin{equation}\label{Eq:Z2}
M_n:=\sup_{|z|\leq r_n}\frac{f^{\#}(z)}{\varphi(|z|)}=\frac{f^{\#}(z_n)}{\varphi(|z_n|)}.
\end{equation}
Define 
\begin{equation}\label{Eq: rho}
\displaystyle\rho_n=1/M_n=\varphi(|z_n|)/f^{\#}(z_n)\to 0  \ \text{ as \ } n\to \infty.
\end{equation} 
Let 
$\displaystyle g_n(\zeta)=f\left(z_n+\frac{\rho_n\zeta}{\varphi(|z_n|)}\right)$ for 
$\displaystyle |\zeta|<R_n=\frac{(1-|z_n|)\varphi(|z_n|)}{\rho_n}\to\infty$ as $n\to \infty$.  Then 
\begin{equation*}
g_n^\#(0) =\frac{\rho_n}{\varphi(|z_n|)}f^{\#}(z_n)=1.
\end{equation*}
We now aim to show that the sequence $\{g_n(\zeta)\}$ is normal. To see this, consider 
\begin{align*}
g_n^\#(\zeta) &= \frac{\rho_n}{\varphi(|z_n|)}f^\#\left(z_n+\frac{\rho_n \zeta}{\varphi(|z_n|)}\right)\\
&= \frac{\rho_n\varphi \left(\big|z_n+\frac{\rho_n \zeta}{\varphi(|z_n|)}\big|\right)}{\varphi(|z_n|)}
\frac{f^\#\left(z_n+\frac{\rho_n \zeta}{\varphi(|z_n|)}\right)}{\varphi \left(\big|z_n+\frac{\rho_n \zeta}{\varphi(|z_n|)}\big|\right)}
\\&= \frac{\varphi \left(\big|z_n+\frac{\rho_n \zeta}{\varphi(|z_n|)}\right)\big|}{\varphi(|z_n|)}
\bigg(\rho_n\frac{f^\#\left(z_n+\frac{\rho_n \zeta}{\varphi(|z_n|)}\right)}{\varphi \left(\big|z_n+\frac{\rho_n \zeta}{\varphi(|z_n|)}\big|\right)
}\bigg)
\rightarrow 1, \ 
\text{ by  \eqref{Eq: rho}}. 
\end{align*}
This shows that the sequence $\{g_n(\zeta)\}$ is normal and it has a convergent subsequence which converges to a harmonic 
mapping $g$ in $\mathbb{C}$ with $g^{\#}(0)=1\neq 0$. Hence, we infer that $g$ a non-constant harmonic mapping.
\smallskip

 We now prove the converse part of the theorem. Assume that $f$ is $\varphi$-normal. By definition, there exists a constant 
 $M>0$ such that $\displaystyle\sup_{z\in D}\frac{f^{\#}(z)}{\varphi(|z|)}\leq M$. The functions $g_n$ defined by 
 $$g_n(\zeta)=f\left(z_n+\frac{\rho_n\zeta}{\varphi(|z_n|)}\right)$$ are defined for 
 $\displaystyle |\zeta|<\frac{1}{\rho_n}(1-|z_n|)\varphi(|z_n|)$.   By the definition of the smoothly increasing function, we get
  $\displaystyle \frac{\rho_n}{(1-|z_n|)\varphi(|z_n|)}\to 0$ as $n\to \infty$.
 Since 
 \begin{align*}
 g_n^{\#}(\zeta)&=\frac{\rho_n}{\varphi(|z_n|)} f^{\#}\left(z_n+\frac{\rho_n\zeta}{\varphi(|z_n|)}\right)\\
&=\frac{\rho_n}{\varphi(|z_n|)} \varphi\left(\big| z_n+\frac{\rho_n\zeta}{\varphi(|z_n|)}\big|\right)
\frac{ f^{\#}\left(z_n+\frac{\rho_n\zeta}{\varphi(|z_n|)}\right)}{\varphi\left(\big| z_n+\frac{\rho_n\zeta}{\varphi(|z_n|)}\big|\right)}\\
&\leq \rho_n \frac{{\varphi\left(\big| z_n+\frac{\rho_n\zeta}{\varphi(|z_n|)}\big|\right)}}{\varphi(|z_n|)} M \to 0 \ \text{ as } n\to \infty.
 \end{align*}
 Thus, $\{g_n\}$ is normal and $g_n^{\#}(\zeta)\to 0$, whence we infer that $g$ is constant, which is a contradiction.
\end{proof}

\begin{proof}[Proof of Theorem~\ref{T: new Z}]
Assume that $f$ is $\varphi$-normal. Let $\{z_n\}\subset \mathbb{D}$ such that $|z_n|\to 1$ as $n\to \infty$. Let 
$z\in D(0, r):=\{z\,:\,|z|<r\}$, and  define 
$\phi_a(z):=a+z/\varphi(|a|)$, for $a\in \mathbb{D}$. Let $f=h+\overline{g}$, since $f$ is harmonic and $\phi_a$ is holomorphic
 it follows that $f\circ \phi_a$ is harmonic.   Therefore,
 \begin{align*}
 (f\circ \phi_{z_n})^\#(z)&=\frac{|(h\circ \phi_{z_n})'(z)|+|(g\circ \phi_{z_n})'(z)|}{1+|f(\phi_{z_n}(z))|^2}\\
 &= \frac{|h'(\phi_{z_n}).\phi_{z_n}'(z)|+|g'(\phi_{z_n}).\phi_{z_n}'(z)|}{1+|f(\phi_{z_n}(z))|^2}\\
 &=\frac{1}{\varphi(|z_n|)}f^{\#}(\phi_{z_n}(z)).
 \end{align*}
 Since $f$ is $\varphi$-normal, i.e., $f^{\#}(z)=O(\varphi(|z|))$, therefore  there exists a positive constant $M$ and an
  $N\in \mathbb{Z}_+$ such that
 \begin{equation*}
(f\circ \phi_{z_n})^\#(z)\leq \frac{1}{\varphi(|z_n|)}M\varphi(|\phi_{z_n}(z)|) \qquad \forall \ n\geq N \ \text{ and } z\in D(0, r).
\end{equation*}
Let $\psi:=1/ \varphi$, then $\psi$ is decreasing and convex, and
\begin{align*}
\lim_{n\to \infty} \sup_{|z|\leq r} \frac{\varphi(|\phi_{z_n}(z)|)}{\varphi(|z_n|)}
&\leq \lim_{n\to \infty}\frac{\psi(|z_n|)}{\psi(|z_n+r\psi(|z_n|))}\leq \lim_{n\to \infty}\frac{1}{1+\psi'(|z_n|)r}
\end{align*}
Since $\displaystyle \lim_{s\to 1^-}\varphi(s)(1-s)=\infty$ we get that $\displaystyle\lim_{s\to 1^-}\frac{1}{1+r\psi'(s)}=1$.
Hence we have $(f\circ \phi_{z_n})^\#(z)\leq 1$ in $D(0, r)$ for all $n\geq N$. Therefore we conclude, by Marty's theorem,
that $\{f\circ \phi_{z_n}\,:\,n\in\mathbb{Z}_+\}$ is normal  in $\mathbb{C}$.
 \end{proof}


\begin{proof}[Proof of Theorem~\ref{T: five valued}]
Assume that $f$ satisfies \eqref{eq: five point cond} for some $E$, with cardinality  $\#E=5$, and $f$ is not $\varphi$-normal. Then
by the rescaling result~\ref{T M: Zalcman}, there exist $\{z_n\}, \ \{\rho_n\}$, $\rho_n>0$ such that    
$\displaystyle\frac{\rho_n}{(1-|z_n|)\varphi(|z_n|)}\to 0,$ and a sense preserving harmonic mapping $g$ such that 
$f\left(z_n+\frac{\rho_n}{\varphi(|z_n|)}\zeta\right)$ converges compactly to $g(\zeta)$. Let $\alpha\in {\mathbb{C}}$ such that 
$g(\zeta)=\alpha$ has a solution $z_0$ which is not a multiple solution. By Lemma~\ref{L: Hurwitz}, for large values of $n$, 
$g_n(\zeta):=f(z_n+\frac{\rho_n}{\varphi(|z_n|)}\zeta)-\alpha$ 
has zeros $\zeta_n$ that converges to $z_0$ as $n\to \infty$. Therefore, $g_n(\zeta_n)-\alpha=0$, and
$$g_n^\#(\zeta_n)=\frac{\rho_n}{\varphi(|z_n|)}f^\#\left(z_n+\frac{\rho_n}{\varphi(|z_n|)}\zeta_n\right).$$
Let $s_n=z_n+\frac{\rho_n}{\varphi(|z_n|)}\zeta_n$. Then
\begin{align*}
\frac{f^\#(s_n)}{\varphi(|s_n|)} &=\frac{\varphi(|z_n|)}{\rho_n}\frac{g_n^\#(\zeta_n)}{\varphi(|s_n|)} \to \infty,
\end{align*}
because, as $n\to \infty$, $g_n^\#(\zeta_n)\to g(\zeta_0)$, ${|s_n|}/{|z_n|}\to 1$, and $\rho_n\to 0$.
\smallskip

We just proved that if $g(\zeta)-\alpha$ has a simple zero, then 
\begin{equation}
\sup_{z\in f^{-1}(E)}\frac{f^{\#}(z)}{\varphi(|z|)}=\infty.
\end{equation}
Lemma~\ref{L: multiple sol},  however, states that there can be at most four values of $\alpha$ 
for which all zeros of $g(\zeta)-\alpha$ are multiple zeros. Consequently, we conclude that if $f$ is a sense-preserving
non-normal harmonic mapping in $\mathbb{D}$, then for each complex number $\alpha$, with a maximum of four exceptions, 
we have
\begin{equation}
\sup_{z\in f^{-1}(E)}\frac{f^{\#}(z)}{\varphi(|z|)}<\infty.
\end{equation}
This completes the proof.
\end{proof}


\begin{proof}[Proof of Theorem~\ref{T: four point}]
Assume on the contrary that $f=h+\bar{g}$ is not $\varphi$-normal, then there exists $\{z_n\}\subset \mathbb{D}$, 
$z_n\to 1^-$, $\rho_n\to 0$ such that the sequence 
\begin{equation}\label{eq: cnvg 4 pt} 
F_n(\zeta)=f\left(z_n+\frac{\rho_n\zeta}{\varphi(|z_n|)}\right)=h\left(z_n+\frac{\rho_n\zeta}{\varphi(|z_n|)}\right)+
\overline{g\left(z_n+\frac{\rho_n\zeta}{\varphi(|z_n|)}\right)}\to F
\end{equation}
converges uniformly on compact subsets to a non-constant harmonic mapping $F(\zeta)=H(\zeta)+\overline{G(\zeta)}$ in 
$\mathbb{C}$. 
Let $$ F_n(\zeta)=h_n(\zeta)+\overline{g_n(\zeta)}, \text{ where } h_n(\zeta):= h\left(z_n+\frac{\rho_n\zeta}{\varphi(|z_n|)}\right) \text{ and }
g_n(\zeta):= g\left(z_n+\frac{\rho_n\zeta}{\varphi(|z_n|)}\right).$$ 
Then $h_n'(\zeta)\to H'(\zeta)$ and $g_n'(\zeta)\to G'(\zeta)$, that is, 
$F_n'(\zeta)=h_n'(\zeta)+\overline{g_n'(\zeta})\to H'(\zeta)+\overline{G'(\zeta)}.$ 
\smallskip

{\bf{ Claim:}} For any number $a\in E$, each zero of $F-a$ has multiplicity at least $3$..
\smallskip

\noindent For any zero of $F-a$, say $\zeta_0$, by Lemma~\ref{L: Hurwitz}, there exists a sequence 
$\{\zeta_n\}$ of points converging to $\zeta_0$ such that 
\begin{equation*}
F_n(\zeta_n)=f\left(z_n+\frac{\rho_n\zeta_n}{\varphi(|z_n|)}\right)=a
\end{equation*}
Then, by the hypotheses, there exists an $M>0$ such that 
\begin{equation*}
\frac{\big|h'\left(z_n+\frac{\rho_n\zeta_n}{\varphi(|z_n|)}\right)\big|+\big|g'\left(z_n+\frac{\rho_n\zeta_n}{\varphi(|z_n|)}\right)\big|}{1
+|f\left(z_n+\frac{\rho_n\zeta_n}{\varphi(|z_n|)}\right)|^2}\leq M\varphi\left(\big|z_n+\frac{\rho_n\zeta_n}{\varphi(|z_n|)}\big|\right),
\end{equation*}
and
\begin{equation*}
\frac{\big|h''\left(z_n+\frac{\rho_n\zeta_n}{\varphi(|z_n|)}\right)\big|+\big|g''\left(z_n+\frac{\rho_n\zeta_n}{\varphi(|z_n|)}\right)\big|}{1
+\left(\big|h'\left(z_n+\frac{\rho_n\zeta_n}{\varphi(|z_n|)}\right)\big|+\big|g'\left(z_n+\frac{\rho_n\zeta_n}{\varphi(|z_n|)}\right)\big|
\right)^2}\leq M.
\end{equation*}

Therefore,
\begin{align}
F_n^\#(\zeta_n)= \frac{|h_n'(\zeta_n)|+|g_n'(\zeta_n)|}{1
+|f_n(\zeta_n)|^2}& =\frac{\rho_n}{\varphi(|z_n|)}\frac{\big|h'\left(z_n+\frac{\rho_n\zeta_n}{\varphi(|z_n|)}\right)\big|+\big|g'\left(z_n+\frac{\rho_n\zeta_n}{\varphi(|z_n|)}\right)\big|}{1
+|f\left(z_n+\frac{\rho_n\zeta_n}{\varphi(|z_n|)}\right)|^2}\notag\\
&\leq \rho_n M\frac{\varphi\left(\big|z_n+\frac{\rho_n\zeta_n}{\varphi(|z_n|)}\big|\right)}{\varphi(|z_n|)}.
\end{align}
From this, \eqref{eq: phi cnvrgnc}, and \eqref{eq: cnvg 4 pt} we get 
\begin{equation*}
\frac{|H'(\zeta_0)|+|G'(\zeta_0)|}{1+|F(\zeta_0)|^2}=0.
\end{equation*}
Since $F$ is non-constant, whence $|H'(\zeta_0)|+|G'(\zeta_0)|=0$. Therefore, we get that $H'(\zeta_0)=0$ and
$G'(\zeta_0)=0$. Hence $\zeta_0$ is a zero of order at least $2$ of $F-a$. We now aim to show that zeros of $F-a$ are of 
order at most $3$.
\smallskip

Assume that $a\in\mathbb{C}$. Then we have
  
 \begin{align*}
\big|H'\left(z_n+\frac{\rho_n\zeta_n}{\varphi(|z_n|)}\right)\big|+\big|G'\left(z_n+\frac{\rho_n\zeta_n}{\varphi(|z_n|)}\right)\big|
&=\left(1+\big|F\left(z_n+\frac{\rho_n\zeta_n}{\varphi(|z_n|)}\right)\big|^2\right)F^\#\left(z_n+\frac{\rho_n\zeta_n}{\varphi(|z_n|)}\right)\\
&\leq M\left(1+\max_{b\in E\setminus\{\infty\}}|b|^2\right)\varphi\left(\big|z_n+\frac{\rho_n\zeta_n}{\varphi(|z_n|)}\big|\right)
\end{align*}
Which further gives

\begin{align}
\frac{|h_n''(\zeta_n)|+|g_n''(\zeta_n)|}{1+(|h_n'(\zeta_n)|+|g_n'(\zeta_n)|)^2}
&=\frac{\rho_n^2}{\varphi^2(|z_n|)}\frac{\big|h''\left(z_n+\frac{\rho_n\zeta_n}{\varphi(|z_n|)}\right)\big|
+\big|g''\left(z_n+\frac{\rho_n\zeta_n}{\varphi(|z_n|)}\right)\big|}{1+\frac{\rho_n^2}{\varphi^2(|z_n|)}\left(\big|h'\left(z_n+
\frac{\rho_n\zeta_n}{\varphi(|z_n|)}\right)\big|
+\big|g'\left(z_n+\frac{\rho_n\zeta_n}{\varphi(|z_n|)}\right)\big|
\right)^2}\notag\\
&= \frac{\rho_n^2}{\varphi^2(|z_n|)}\frac{\big|h''\left(z_n+\frac{\rho_n\zeta_n}{\varphi(|z_n|)}\right)\big|
+\big|g''\left(z_n+\frac{\rho_n\zeta_n}{\varphi(|z_n|)}\right)\big|}{1+\left(\big|h'\left(z_n+\frac{\rho_n\zeta_n}{\varphi(|z_n|)}\right)\big|
+\big|g'\left(z_n+\frac{\rho_n\zeta_n}{\varphi(|z_n|)}\right)\big|\right)^2}\times\notag\\
& \qquad \qquad  \frac{1+\left(\big|h'\left(z_n+\frac{\rho_n\zeta_n}{\varphi(|z_n|)}\right)\big|
+\big|g'\left(z_n+\frac{\rho_n\zeta_n}{\varphi(|z_n|)}\right)\big|\right)^2}{1+\frac{\rho_n^2}{\varphi^2(|z_n|)}\left(\big|h'\left(z_n+
\frac{\rho_n\zeta_n}{\varphi(|z_n|)}\right)\big|
+\big|g'\left(z_n+\frac{\rho_n\zeta_n}{\varphi(|z_n|)}\right)\big|
\right)^2}\notag\\
&\leq \frac{\rho_n^2}{\varphi^2(|z_n|)} M \left(1+\left(\big|h'\left(z_n+\frac{\rho_n\zeta_n}{\varphi(|z_n|)}\right)\big|
+\big|g'\left(z_n+\frac{\rho_n\zeta_n}{\varphi(|z_n|)}\right)\big|\right)^2\right)\label{eq: Double}
\end{align}
Since 
\begin{equation*}
\frac{\big|h'\left(z_n+\frac{\rho_n\zeta_n}{\varphi(|z_n|)}\right)\big|
+\big|g'\left(z_n+\frac{\rho_n\zeta_n}{\varphi(|z_n|)}\right)\big|}{1+|f\left(z_n+\frac{\rho_n\zeta_n}{\varphi(|z_n|)}\right)|^2}
\leq M\varphi\left(\big|z_n+\frac{\rho_n\zeta_n}{\varphi(|z_n|)}\big|\right)
\end{equation*}
we get
\begin{equation*}
\left(\big|h'\left(z_n+\frac{\rho_n\zeta_n}{\varphi(|z_n|)}\right)\big|
+\big|g'\left(z_n+\frac{\rho_n\zeta_n}{\varphi(|z_n|)}\right)\big|\right)^2
\leq M^2\varphi^2\left(\big|z_n+\frac{\rho_n\zeta_n}{\varphi(|z_n|)}\big|\right)
\left({1+|f\left(z_n+\frac{\rho_n\zeta_n}{\varphi(|z_n|)}\right)|^2}\right)^2.
\end{equation*}
Using this in \eqref{eq: Double}, we get
\begin{align}
\frac{|h_n''(\zeta_n)|+|g_n''(\zeta_n)|}{1+(|h_n'(\zeta_n)|+|g_n'(\zeta_n)|)^2}
&\leq \frac{\rho_n^2}{\varphi^2(|z_n|)} M \left(1+M^2\left(1+\max_{b\in E\setminus\{\infty\}}|b|^2\right)^2. 
\varphi^2\left(\big|z_n+\frac{\rho_n\zeta_n}{\varphi(|z_n|)}\big|\right)\right)\notag\\
&\leq \rho_n^2 M \left(1+M^2\left(1+\max_{b\in E\setminus\{\infty\}}|b|^2\right)^2. 
\left(\frac{\varphi\left(\big|z_n+\frac{\rho_n\zeta_n}{\varphi(|z_n|)}\big|\right)}{\varphi(|z_n|}\right)^2\right)\notag\\
&\longrightarrow 0.\notag
\end{align}
Thus, we get that $|H''(\zeta_0)|+|G''(\zeta_0)|=0$, whence, $H''(\zeta_0)=G''(\zeta_0)=0$. This implies that 
$\zeta_0$ is a zero, of multiplicity at least 3, of $F-a$.  Since cardinality of $E$ is $4$ we get a contradiction 
by Lemma~\ref{L: four zeros}. Thus, $f$ is $\varphi$-normal harmonic mapping.
\end{proof}

\medskip

		\noindent{\bf Data Availability.} There is no additional data involved.

\end{document}